\documentclass[12pt, reqno]{amsart}
 \usepackage{amsmath, amsthm, amscd, amsfonts, amssymb, graphicx, color, float}
\usepackage[bookmarksnumbered, colorlinks, plainpages]{hyperref}
\input{mathrsfs.sty}
\hypersetup{colorlinks=true,linkcolor=red, anchorcolor=green, citecolor=cyan, urlcolor=red, filecolor=magenta, pdftoolbar=true}

\setlength{\textwidth}{6 in} \setlength{\textheight}{8 in}
\setlength{\evensidemargin}{-0.01in}\setlength{\oddsidemargin}{-0.01in}

\newtheorem{theorem}{Theorem}[section]
\newtheorem{lemma}[theorem]{Lemma}

\newtheorem{corollary}[theorem]{Corollary}
\theoremstyle{definition}

\theoremstyle{remark}
\newtheorem{remark}[theorem]{Remark}
\numberwithin{equation}{section}
\begin{document}

\title[Inequalities in noncommutative probability spaces]{Etemadi and Kolmogorov inequalities in noncommutative probability spaces}
\author[A. Talebi, M.S. Moslehian, Gh. Sadeghi]{Ali Talebi$^1$, Mohammad Sal Moslehian $^1$$^*$ and Ghadir Sadeghi$^2$}

\address{$^1$ Department of Pure Mathematics, Center of Excellence in Analysis on Algebraic Structures (CEAAS), Ferdowsi University of
Mashhad, P.O. Box 1159, Mashhad 91775, Iran}
\email{alitalebimath@yahoo.com }
\email{moslehian@um.ac.ir, moslehian@member.ams.org}

\address{$^2$ Department of Mathematics and Computer Sciences, Hakim Sabzevari University, P.O. Box 397, Sabzevar, Iran}
\email{ghadir54@gmail.com, g.sadeghi@hsu.ac.ir}

\thanks{$^*$ Corresponding author}

\subjclass[2010]{Primary 46L53; Secondary 46L10, 47A30, 60F99.}
\keywords{Noncommutative probability space; trace; noncommutative Etemadi inequality; noncommutative Kolmogorov inequality.}

\begin{abstract}
Based on a maximal inequality type result of Cuculescu, we establish some noncommutative maximal inequalities such as Haj\'ek--Penyi inequality and Etemadi inequality. In addition, we present a noncommutative Kolmogorov type inequality by showing that if $x_1, x_2, \ldots, x_n$ are successively independent self-adjoint random variables in a noncommutative probability space $(\mathfrak{M}, \tau)$ such that $\tau\left(x_k\right) = 0$ and $s_k s_{k-1} = s_{k-1} s_k$, where $s_k = \sum_{j=1}^k x_j$, then for any $\lambda > 0$ there exists a projection $e$ such that
$$1 - \frac{(\lambda + \max_{1 \leq k \leq n} \|x_k\|)^2}{\sum_{k=1}^n {\rm var}(x_k)}\leq \tau(e)\leq \frac{\tau(s_n^2)}{\lambda^2}.$$
As a result, we investigate the relation between convergence of a series of independent random variables and the corresponding series of their variances.
\end{abstract}

\maketitle

\section{Introduction and preliminaries}

One of important types of probability inequalities relates tail probabilities for the maximal partial sum of independent random variables such as Kolmogorov and Etemadi inequalities. It is known that a strong law of large numbers can be deduced from a maximal inequality for a certain sequence of random variables \cite{BAT}.
In \cite{HR}, Haj\'ek and Renyi extended the maximal Kolmogorov inequality to weighted sums. It asserts that if $X_1, X_2, \ldots, X_n$ are independent mean zero random variables and $\{\alpha_k: 0\leq k \leq n \}$ is a non-increasing positive real numbers, then
\begin{align*}
\mathbb{P}\left(\max_{1 \leq k \leq n} \alpha_k\left|\sum_{i=1}^k X_i\right| \geq t \right) \leq \frac{\sum_{k=1}^n\alpha_k^2{\rm Var}(X_k)}{t^2}
\end{align*}
for any $t > 0$.
In \cite{E}, Etemadi established an inequality, which deduces a weak law of large numbers. It states that if $X_1, X_2, \ldots, X_n$ are independent random variables defined on a probability space $(\Omega, \mathcal{F}, \mathbb{P})$, then
\begin{align*}
\mathbb{P}\left(\max_{1 \leq k \leq n} \left|\sum_{i=1}^k X_i\right| \geq 3t \right) \leq 3 \max_{1 \leq k \leq n} \mathbb{P}\left(\left|\sum_{i=1}^k X_i\right| \geq t \right)
\end{align*}
for any positive real number $t>0$.

One of the major problems occurring in the noncommutative probability theory concerns with the extensions of classical inequalities to the noncommutative setup; cf \cite{SM1, SM2, TMS}. In the sequel, we recall some basic preliminaries concerning noncommutative probability spaces that will be needed throughout.

A von Neumann algebra $\mathfrak{M}$ on a Hilbert space $\mathcal{H}$ with the unit element $\textbf{1}$ equipped with a normal faithful tracial state $\tau:\mathfrak{M}\to \mathbb{C}$ is called a noncommutative probability space. The elements of $\mathfrak{M}$ are called (non-commutative) random variables. We denote by $\leq$ the usual order on the self-adjoint part $\mathfrak{M}^{sa}$ of $\mathfrak{M}$.
Recall that $L_p(\mathfrak{M})$ can be regarded as the subspace of the $p$-integrable operators in the algebra $\overline{\mathfrak{M}}$ of the measurable operators in the sense of Nelson (see \cite{N}).\\
For each self-adjoint operator $x\in \overline{\mathfrak{M}}$, there exists a unique spectral measure $E$ as a $\sigma$-additive mapping with respect to the strong operator topology from the Borel $\sigma$-algebra $\mathcal{B}(\mathbb{R})$ of $\mathbb{R}$ into the set of all orthogonal projections such that for every Borel function $f: \sigma(x)\to \mathbb{C}$ the operator $f(x)$ is defined by $f(x)=\int f(\lambda)dE(\lambda)$, in particular, $\textbf{1}_B(x)=\int_BdE(\lambda)=E(B)$.
Further, if $x\geq 0$ and $t>0$ then the inequality
\begin{eqnarray*}
{\rm Prob}(x\geq t):=\tau(\textbf{1}_{[t,\infty)}(x))\leq t^{-1}\tau(x)\,.
\end{eqnarray*}
is known as the Markov inequality in the literature. For any self-adjoint element $x \in L_2(\mathfrak{M})$ and $\lambda>0$, it follows from the Markov inequality that
\begin{eqnarray}\label{GH1}
\tau\left(\textbf{1}_{[\lambda, \infty)}(|x - \tau(x)|)\right) \leq \lambda^{-2}{\rm var}(x),
\end{eqnarray}
where ${\rm var}(x) = \tau\left( (x - \tau(x))^2\right)$ and $|x| = (x^*x)^{\frac{1}{2}}$ denotes the absolute value of $x$. Inequality (\ref{GH1}) is called the Chebyshev inequality.

Let $\mathfrak{N}$ be a von Neumann subalgebra of $\mathfrak{M}$. Then there exists a normal contractive positive mapping projecting $\mathcal{E}_{\mathfrak{N}}:\mathfrak{M}\to\mathfrak{N}$ satisfying the following properties:\\
(i) $\mathcal{E}_{\mathfrak{N}}(axb) =a\mathcal{E}_{\mathfrak{N}}(x)b$ for any $x\in\mathfrak{M}$ and $a, b\in\mathfrak{N}$;\\
(ii) $\tau\circ\mathcal{E}_{\mathfrak{N}}=\tau$.\\
Moreover, $\mathcal{E}_{\mathfrak{N}}$ is the unique mapping satisfying (i) and (ii). The mapping $\mathcal{E}_{\mathfrak{N}}$ is called the conditional expectation of $\mathfrak{M}$ with respect to $\mathfrak{N}$.
Let $\mathcal{P}$ be the lattice of projections of $\mathfrak{M}$. Set $p^{\perp}=1-p$ for $p\in \mathcal{P}$. Given a family of projections $(p_{\lambda})_{\lambda\in\Lambda}\subseteq\mathcal{P}$, we denote by $\vee_{\lambda\in\Lambda}p_{\lambda}$ (resp., $\wedge_{\lambda\in\Lambda}p_{\lambda}$) the projection from $\mathfrak{H}$ onto the closed subspace generated by $\cup_{\lambda \in \Lambda}p_{\lambda}(\mathfrak{H})$ (resp., onto the subspace $\cap_{\lambda\in\Lambda}p(\mathfrak{H})$). If $(p_{\lambda})_{\lambda\in\Lambda}$ is a family of projections in $\mathfrak{M}$ then
\begin{eqnarray}\label{NE}
\tau\left(\vee_{\lambda\in\Lambda}p_{\lambda}\right)\leq\sum_{\lambda\in\Lambda}\tau(p_{\lambda}).
\end{eqnarray}
The reader is referred to \cite{Xu2} for more information.

In this paper, we adopt some methods in classical probability spaces (see e.g. \cite{G}) to the non-commutative setting. Our approaches, however, employ several essential operator algebra techniques. In fact, it is not routine to find noncommutative versions of classical inequalities involving maximum of random variables. To illustrate the difficulties, we notify that the maximum of two self-adjoint operators does not exist, in general. Further, the usual triangle inequality is no longer valid for the modulus of operators, namely, we do not have $|x + y| \leq |x| + |y|$ for $x, y \in \mathcal{M}$, in general.
Employing Cuculescu's approach we establish some noncommutative maximal inequalities such as Etemadi inequality and Haj\'ek--Renyi inequality, which is an extension of a result of \cite[Proposition 5.1]{BAT}. We also present a noncommutative Kolmogorov type inequality. We also state a relation between convergence of a series of independent random variables and the corresponding series of their variances. The commutative versions of the given inequalities are discussed as well.


\section{Noncommutative Kolmogorov type inequalities}
Recall that Kolmogorov's inequality is significantly stronger than Chebyshev's inequality.
In this section we first provide a noncommutative Haj\'ek--Renyi inequality and as a consequence, we present a noncommutative Kolmogorov inequality. After that a noncommutative Kolmogorov type inequality is given.\\
As an extension of the classical independent random variables, recall that two von Neumann subalgebras $\mathfrak{N}_1, \mathfrak{N}_2 \subset \mathfrak{M}$ are independent (over $\mathbb{C}$) if
\begin{equation*}
\tau\left( xy \right) = \tau(x)\tau(y), \quad x\in \mathfrak{N}_1, y \in \mathfrak{N}_2.
\end{equation*}
A sequence $\left(x_n\right)_{n=1}^\infty$ in $L_p(\mathfrak{M})$ is said to be successively independent \cite{JX} if the subalgebras $W^*(x_j)$ and $W^*\left( x_1, \ldots, x_{j-1}\right)$ are independent (over $\mathbb{C}$) for any $1 < j$, where $W^*(A)$ denotes the von Neumann algebra generated by the spectral projections of the real and imaginary parts of any member of $A \subset \mathfrak{M}$.\\
We introduce the other types of the notion of independence which will be used in the sequel.
We say a sequence $\left(x_k\right)_{k=1}^n$ is said to be weakly fully independent (which is weaker than fully independent defined in \cite{JZ}) if for any $1 < j < n$ the subalgebras $W^*\left( x_1, \ldots, x_{j-1}\right)$ and $W^*\left(x_{j}, \ldots, x_n\right)$ are independent.
Clearly
\begin{align*}
\text{weakly full independence} \Longrightarrow \text{successively independence}.
\end{align*}
Note that if $x, y$ are successively independent random variables, then ${\rm var}(x + y) = {\rm var}(x) + {\rm var}(y)$, because
\begin{eqnarray}\label{eqvar1}
{\rm var}(x + y) &=& \tau\left((x + y)^2\right) - \left(\tau(x + y)\right)^2 \nonumber\\
&=& \tau\left(x^2 \right) + \tau\left(y^2 \right) + 2\tau\left(xy \right) - \tau\left(x \right)^2 - \tau\left(y \right)^2 - 2\tau\left(x \right)\tau\left(y \right)\nonumber\\
&=& {\rm var}(x) + {\rm var}(y)\quad \left(\text{as $\tau\left( xy \right) = \tau(x)\tau(y)$} \right).
\end{eqnarray}
\begin{lemma}\label{lem2}
If $p ,q$ are two projections in $\mathfrak{M}$ with $p \leq q$ and $x \geq 0$, then $\| pxp \| \leq \| qxq \|$.
\end{lemma}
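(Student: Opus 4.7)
The plan is to exploit the basic order-theoretic fact that $p \leq q$ forces $qp = pq = p$, which lets one sandwich $pxp$ inside $qxq$ and then apply the standard contractivity estimate for compressions.

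First I would record that since $p \leq q$ as projections, the range of $p$ is contained in the range of $q$, hence $qp = p$ and, by taking adjoints, $pq = p$. This is the only structural input needed; positivity of $x$ plays no role in this step.

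Next I would write
\begin{equation*}
pxp \;=\; (pq)\,x\,(qp) \;=\; p\,(qxq)\,p,
\end{equation*}
and then estimate, for an arbitrary unit vector $\xi \in \mathcal{H}$,
\begin{equation*}
\|pxp\,\xi\| \;=\; \|p(qxq)p\,\xi\| \;\leq\; \|p\|\,\|qxq\|\,\|p\,\xi\| \;\leq\; \|qxq\|\,\|\xi\|,
\end{equation*}
because $\|p\|\leq 1$. Taking the supremum over $\xi$ yields $\|pxp\|\leq \|qxq\|$, which is the claim.

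There is no real obstacle here; the only subtlety worth flagging is the identity $qp = pq = p$, which is what distinguishes the proof from the weaker bound $\|pxp\|\leq \|x\|$. If one prefers an argument that uses the hypothesis $x \geq 0$ explicitly, one may instead write $\|pxp\| = \|x^{1/2}p\|^{2}$ and $\|qxq\| = \|x^{1/2}q\|^{2}$ via the C$^{*}$-identity, and then observe that $x^{1/2}p = x^{1/2}q\,p$ so $\|x^{1/2}p\| \leq \|x^{1/2}q\|$; this route gives the same bound and makes the role of positivity transparent, but the first argument is shorter and suffices.
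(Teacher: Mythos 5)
Your proof is correct, but it takes a different route from the paper's. The paper never uses the identity $qp=pq=p$: it writes $\|pxp\|=\|x^{1/2}p\|^2=\|px^{1/2}\|^2=\|x^{1/2}px^{1/2}\|$ via the C$^*$-identity and the relation $p^2=p$, conjugates the hypothesis $p\leq q$ by $x^{1/2}$ to get $x^{1/2}px^{1/2}\leq x^{1/2}qx^{1/2}$, and then invokes monotonicity of the operator norm on positive elements, finishing with $\|x^{1/2}qx^{1/2}\|=\|qxq\|$. Your main argument instead uses the order-theoretic fact that $p\leq q$ for projections forces $qp=pq=p$ (worth a one-line justification, e.g.\ that $\|q\xi\|=\|\xi\|$ on the range of $p$, but it is standard), writes $pxp=p(qxq)p$, and concludes by submultiplicativity with $\|p\|\leq 1$. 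What your route buys is generality and economy: positivity of $x$ is not needed at all, so you actually prove $\|pxp\|\leq\|qxq\|$ for arbitrary $x\in\mathfrak{M}$; what the paper's route buys is that it bypasses the identity $qp=p$ and rests only on order monotonicity of the norm for positive operators, a pattern it reuses elsewhere (e.g.\ in the spectral-projection estimates of Theorems 2.3 and 2.5). Your second, alternative argument via $\|pxp\|=\|x^{1/2}p\|^2$ and $x^{1/2}p=x^{1/2}qp$ is essentially a hybrid of the two and is also valid.
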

\begin{proof}
\begin{eqnarray*}
\| pxp \| = \| x^{\frac{1}{2}} p\|^2 = \| px^{\frac{1}{2}} \|^2 = \|x^{\frac{1}{2}} px^{\frac{1}{2}} \| \leq \|x^{\frac{1}{2}} qx^{\frac{1}{2}} \| = \| qxq\|.
\end{eqnarray*}
\end{proof}
\begin{theorem}[Noncommutative Haj\'ek--Renyi inequality]
Let $x_1, x_2, \ldots$ be successively independent self-adjoint random variables in $L_2(\mathfrak{M})$ such that $\tau\left(x_n\right) = 0$ for all $n$. Then for every decreasing sequence of positive real numbers $\{\alpha_n\}_{n=0}^\infty$ and for any $\lambda>0$ there exists a projection $p$ such that
\begin{align*}
&\tau(p) \leq \frac{\sum_{n=1}^\infty\alpha_n^2{\rm var}(x_n)}{\lambda^2},\\
&\left\| (1-p)\,\alpha_n|\sum_{k=1}^n x_k|\,(1-p) \right\| \leq \lambda \qquad (n \geq 1).
\end{align*}
\end{theorem}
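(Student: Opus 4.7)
The plan is to follow a Cuculescu-style scheme adapted to the weighted partial sums $y_n := \alpha_n s_n$, where $s_n := \sum_{k=1}^n x_k$. Writing $\mathfrak{M}_n := W^*(x_1,\ldots,x_n)$ with associated conditional expectation $\mathcal{E}_n$, I would let $q_0 := \mathbf{1}$ and, inductively, take $q_n$ to be the spectral projection of the positive operator $q_{n-1} y_n^2 q_{n-1} \in \mathfrak{M}_n$ corresponding to the interval $[0,\lambda^2]$ (read inside the range of $q_{n-1}$). The resulting projections are decreasing in $n$, satisfy $q_n \in \mathfrak{M}_n$, and obey $q_n y_n^2 q_n \leq \lambda^2 q_n$. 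Set $p := \bigvee_{n \geq 1}(\mathbf{1}-q_n)$, so $\mathbf{1}-p \leq q_n$ for each $n$. Then Lemma~\ref{lem2} applied to $|y_n|$, combined with the Kadison--Schwarz type estimate $(q_n|y_n|q_n)^2 \leq q_n y_n^2 q_n \leq \lambda^2 q_n$, yields
$$\|(\mathbf{1}-p)\alpha_n|s_n|(\mathbf{1}-p)\| = \|(\mathbf{1}-p)|y_n|(\mathbf{1}-p)\| \leq \|q_n|y_n|q_n\| \leq \lambda.$$

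To bound $\tau(p)$, I would first apply a Chebyshev cut: on $q_{k-1}-q_k$ the operator $q_{k-1} y_k^2 q_{k-1}$ exceeds $\lambda^2$, so trace cyclicity and $q_k \leq q_{k-1}$ give
$$\lambda^2 \tau(q_{k-1}-q_k) \leq \tau\bigl((q_{k-1}y_k^2 q_{k-1})(q_{k-1}-q_k)\bigr) = \tau(q_{k-1}y_k^2 q_{k-1}) - \tau(q_k y_k^2 q_k).$$
Summing in $k$ produces the near-telescope $\lambda^2 \tau(\mathbf{1}-q_n) \leq \sum_{k=1}^n [\tau(q_{k-1}y_k^2 q_{k-1}) - \tau(q_k y_k^2 q_k)]$.

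The decisive step is to collapse this sum by way of the submartingale identity for $s_n^2$. Successive independence and $\tau(x_{k+1})=0$ yield $\mathcal{E}_k(x_{k+1})=0$ and $\mathcal{E}_k(x_{k+1}^2) = \tau(x_{k+1}^2)\mathbf{1}$, hence $\mathcal{E}_k(s_{k+1}^2) = s_k^2 + \mathrm{var}(x_{k+1})\mathbf{1}$. Using $q_k \in \mathfrak{M}_k$, $\tau \circ \mathcal{E}_k = \tau$, $\tau(q_k) \leq 1$, and $\alpha_{k+1} \leq \alpha_k$, I get
$$\tau(q_k y_{k+1}^2 q_k) = \frac{\alpha_{k+1}^2}{\alpha_k^2}\tau(q_k y_k^2 q_k) + \alpha_{k+1}^2 \mathrm{var}(x_{k+1})\tau(q_k) \leq \tau(q_k y_k^2 q_k) + \alpha_{k+1}^2 \mathrm{var}(x_{k+1}),$$
so $\tau(q_k y_{k+1}^2 q_k) - \tau(q_k y_k^2 q_k) \leq \alpha_{k+1}^2 \mathrm{var}(x_{k+1})$. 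Together with $\tau(q_0 y_1^2 q_0) = \alpha_1^2 \mathrm{var}(x_1)$ this collapses the near-telescope to $\sum_{k=1}^n \alpha_k^2 \mathrm{var}(x_k)$, and normality of $\tau$ then delivers $\tau(p) = \sup_n \tau(\mathbf{1}-q_n) \leq \lambda^{-2}\sum_k \alpha_k^2 \mathrm{var}(x_k)$.

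The hard part will be calibrating the Cuculescu cut so that the operator bound and the trace bound come out jointly: the cut has to be taken at the weighted level $y_n^2 = \alpha_n^2 s_n^2$ in order to obtain the uniform norm estimate, whereas the trace estimate is only accessible through the submartingale identity for the unweighted $s_n^2$, and the monotonicity hypothesis on the $\alpha_k$ enters precisely at the single step where the ratio $\alpha_{k+1}^2/\alpha_k^2 \leq 1$ closes the telescoping. A subsidiary care is the inductive verification $q_n \in \mathfrak{M}_n$, which is what lets the conditional-expectation identities be applied to $q_k y_{k+1}^2 q_k$ in the first place.
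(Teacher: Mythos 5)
Your proposal is correct, and while it shares the Cuculescu-projection skeleton with the paper (the norm bound via Lemma~\ref{lem2} and the defining spectral cut is obtained the same way in both), the heart of the argument --- the trace estimate --- is reached by a genuinely different route. The paper cuts at the level of $e_{k-1}\alpha_k|s_k|e_{k-1}$ at height $\lambda$, forms the orthogonal differences $p_k=e_{k-1}-e_k$, and then works backwards from $\sum_k\alpha_k^2\,\mathrm{var}(x_k)$ by Abel summation in the weights, inserting $\sum_i p_i\le 1$, using the independence inequality $\tau(s_k^2p_i)\ge\tau(s_i^2p_i)$ and a H\"older--McCarthy lower bound $p_is_i^2p_i\ge(\lambda^2/\alpha_i^2)p_i$; monotonicity of $\alpha_k$ enters through $\alpha_k^2-\alpha_{k+1}^2\ge 0$. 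You instead cut the squared weighted sums $q_{k-1}\alpha_k^2s_k^2q_{k-1}$ at $\lambda^2$, use the elementary spectral Chebyshev step $\lambda^2\tau(q_{k-1}-q_k)\le\tau(q_{k-1}y_k^2q_{k-1})-\tau(q_ky_k^2q_k)$, and close the telescope with the conditional-expectation (submartingale) identity $\mathcal{E}_k(s_{k+1}^2)=s_k^2+\mathrm{var}(x_{k+1})\mathbf{1}$, monotonicity entering only through $\alpha_{k+1}^2/\alpha_k^2\le 1$; note that independence over $\mathbb{C}$ in the paper's sense does give $\mathcal{E}_{\mathfrak{M}_k}(y)=\tau(y)\mathbf{1}$ for $y$ affiliated with $W^*(x_{k+1})$, so this identity is legitimate. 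Your route avoids H\"older--McCarthy and the double-sum manipulation and mirrors the classical submartingale proof (hence adapts naturally to martingale-difference hypotheses), at the price of invoking conditional expectations and their $\mathfrak{M}_k$-bimodule property for the affiliated $s_k$; the paper's route stays entirely with trace-product independence and reuses the same $|s_k|$-based projections in its Etemadi argument. Two small points to tidy: in the collapse you silently discard the favorable term $-\tau(q_ny_n^2q_n)\le 0$, and the extension of the factorization $\tau(ab)=\tau(a)\tau(b)$ (equivalently of $\mathcal{E}_k$) from bounded elements to the $L_2$/$L_1$ elements $x_{k+1}$, $x_{k+1}^2$, $q_ks_k$ deserves a one-line density remark --- the same technicality the paper itself glosses over in \eqref{equ2}.
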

\begin{proof}
Fix $n \in \mathbb{N}$ and set $s_n = \sum_{k=1}^n x_k$. We use some ideas of Cuculescu \cite[Proposition 5]{C}.
Let $e_0 = 1$ and inductively on $1\leq k\leq n$, define
\begin{align*}
e_k = \textbf{1}_{[0, \lambda)}\left( e_{k-1} \alpha_k |s_k| e_{k-1} \right)e_{k-1}.
\end{align*}
Since $e_{k-1}$ is a projection, $e_{k-1}$ commutes with $e_{k-1} |s_k| e_{k-1}$ and hence with all its spectral projections; in particular $e_{k-1}$ commutes with $\textbf{1}_{[0, 3 \lambda)}\left( e_{k-1} |s_k| e_{k-1} \right)$. So $e_k$ is also a projection in $W^*\left(x_1, x_2, \ldots, x_k\right)$. It follows from $e_k e_{k-1} = e_k$ that $\left(e_k \right)_{k=1}^n$ is a decreasing sequence.\\
We set
\begin{align*}
p_k = e_{k-1} - e_k
\end{align*}
for each $1 \leq k \leq n$.
Then $ (p_k)_k$ is a sequence of projections.\\
Note that $\tau\left(s_k^2p_i \right) \geq \tau\left(s_i^2p_i \right)$ for $k \geq i$. In fact, by the successive independence of $x_j$ 's, we have
\begin{eqnarray}\label{equ2}
\tau\left(s_k^2p_i \right) &=& \tau\left(\left(s_i + \sum_{l=i+1}^k x_l \right)^2p_i \right)\nonumber\\
&=& \tau\left(s_i^2p_i \right) + \tau\left(\sum_{l=i+1}^k x_l s_ip_i \right) + \tau\left(\sum_{l=i+1}^k x_l p_is_i \right) + \tau\left(\left(\sum_{l=i+1}^k x_l\right)^2 p_i \right)\nonumber\\
&\geq & \tau\left(s_i^2p_i \right) + \tau\left(\sum_{l=i+1}^k x_l\right) \tau\left(s_ip_i \right) + \tau\left(\sum_{l=i+1}^k x_l\right) \tau\left(p_is_i \right)\nonumber\\
&=& \tau\left(s_i^2p_i \right) \quad (\text{by $\tau(x_l)=0$ for all $l$}).
\end{eqnarray}
Therefore,
\begin{align*}
&\sum_{k=1}^n \alpha_k^2 {\rm var}\left(x_k \right)\\
&= \sum_{k=1}^n \left[\alpha_k^2\left( {\rm var}\left(s_k \right) - {\rm var}\left(s_{k-1} \right)\right)\right]\quad \left(\text{by \eqref{eqvar1}} \right)\\
&= \sum_{k=1}^{n-1} \left[\left(\alpha_k^2 - \alpha_{k+1}^2 \right){\rm var}\left(s_k \right)\right] + \alpha_n^2 {\rm var}\left(s_n \right)\\
&= \sum_{k=1}^{n-1} \left[\left(\alpha_k^2 - \alpha_{k+1}^2 \right)\tau\left(s_k^2 \right)\right] + \alpha_n^2 \tau\left(s_n^2 \right) \quad \left(\text{as $\tau\left(x_k \right)=0$}\right)\\
&\geq \sum_{k=1}^{n-1} \left[\left(\alpha_k^2 - \alpha_{k+1}^2 \right)\tau\left(s_k\left(\sum_{i=1}^np_i\right) s_k \right)\right] + \alpha_n^2 \tau\left(s_n\left(\sum_{i=1}^np_i\right) s_n \right)
\end{align*}
as $p_k p_j=(e_{j-1} - e_j)(e_{k-1} - e_k)= 0$ when $j\neq k$ and $\sum_{i=1}^np_i=1-e_n\leq 1$. So
\begin{align*}
\sum_{k=1}^n \alpha_k^2 {\rm var}\left(x_k \right)&\geq \sum_{i=1}^n\sum_{k=1}^{n-1} \left[\left(\alpha_k^2 - \alpha_{k+1}^2 \right)\tau\left(s_k^2p_i \right)\right] + \alpha_n^2\sum_{i=1}^n \tau\left(s_n^2p_i \right)\\
&\geq \sum_{i=1}^n\sum_{k=i}^{n-1} \left[\left(\alpha_k^2 - \alpha_{k+1}^2 \right)\tau\left(s_k^2p_i \right)\right] + \alpha_n^2\sum_{i=1}^n \tau\left(s_n^2p_i \right),
\end{align*}
since $\{\alpha_k\}_{k}$ is non-increasing and $\tau\left(s_k^2p_i \right) = \tau\left(p_is_k^2p_i \right) \geq 0$. Thus,
\begin{align}\label{in5}
&\sum_{k=1}^n \alpha_k^2 {\rm var}\left(x_k \right)\nonumber\\
&\geq \sum_{i=1}^n\sum_{k=i}^{n-1} \left[\left(\alpha_k^2 - \alpha_{k+1}^2 \right)\tau\left(s_i^2p_i \right)\right] + \alpha_n^2\sum_{i=1}^n \tau\left(s_i^2p_i \right) \quad \left(\text{by \eqref{equ2}} \right)\nonumber\\
&\geq \sum_{i=1}^n\sum_{k=i}^{n-1} \left[\left(\alpha_k^2 - \alpha_{k+1}^2 \right)\frac{\lambda^2}{\alpha_i^2} \tau\left(p_i \right)\right] + \alpha_n^2\sum_{i=1}^n \frac{\lambda^2}{\alpha_i^2} \tau\left(p_i \right) \nonumber\\
&= \lambda^2 \sum_{i=1}^n \tau\left(p_i \right)\nonumber\\
&= \lambda^2 \tau\left(\bigvee_{i=1}^np_i \right), \quad (\text{again as $\sum_{k=1}^n p_k = \bigvee_{k=1}^n p_k$})
\end{align}
in which to get the last inequality we note that $p_is_i^2p_i \geq \frac{\lambda^2}{\alpha_i^2} p_i$. To show this, let $\xi \in \mathcal{H}$. Evidently, we may assume that $\|p_i\xi \| = 1$. It follows from the definition of $p_i$ that $p_i \xi \in \textbf{1}_{(\lambda, \infty)}\left(\alpha_i\,|s_i|\right)\left(\mathcal{H}\right)$. Applying the Holder--McCarthy inequality stating that $\langle T^2\eta, \eta\rangle \geq \langle T\eta, \eta \rangle^2$ for any positive operator $T$ and any unit vector $\eta$ (see \cite{MP}) we deduce that
\begin{align*}
\langle p_is_i^2p_i \xi, \xi \rangle = \langle s_i^2 p_i \xi, p_i \xi \rangle = \langle |s_i|^2 p_i \xi, p_i \xi \rangle > \frac{\lambda^2}{\alpha_i^2}.
\end{align*}
So $p_is_i^2p_i \geq \frac{\lambda^2}{\alpha_i^2} p_i$.\\
Letting $n \rightarrow \infty$ and using the normality of the trace, we get
\begin{align*}
\tau(p) \leq \frac{\sum_{n=1}^\infty\alpha_n^2{\rm var}(x_n)}{\lambda^2},
\end{align*}
where $p = \bigvee_{n=1}^\infty p_n$. Furthermore, by Lemma \ref{lem2} and Borel functional calculus
\begin{eqnarray*}
\left\| (1-p)\,\alpha_n|s_n|\,(1-p) \right\| \leq \left\| e_n\,\alpha_n|s_n|\,e_n \right\| = \left\| e_ne_{n-1}\,\alpha_n|s_n|\,e_{n-1}e_n \right\| \leq \lambda.
\end{eqnarray*}
\end{proof}
Putting $\alpha_k=1$ for all $k$ the noncommutative Haj\'ek--Renyi inequality reduces the noncommutative Kolmogorov inequality as follows.
\begin{corollary}[Noncommutative Chebyshev-type inequality]
Let $x_1, x_2, \ldots, x_n$ be successively independent self-adjoint random variables such that $\tau\left(x_k\right) = 0$ for all $k$. Then for any $\lambda>0$ there exists a projection $e$ such that
\begin{align*}
&\tau(e) \leq \frac{\sum_{k=1}^n{\rm var}(x_k)}{\lambda^2},\\
&\left\| (1-e)\,|s_n|\,(1-e) \right\| \leq \lambda \qquad (n \geq 1).
\end{align*}
\end{corollary}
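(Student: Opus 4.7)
The plan is to derive this as an immediate specialization of the noncommutative Haj\'ek--Renyi inequality proved just above. I would apply that theorem to the weight sequence $\alpha_k = 1$ for every $k$, which is trivially a (non-strictly) decreasing sequence of positive reals, so the hypothesis on $\{\alpha_k\}$ is satisfied without any further work. The hypotheses on the $x_k$ (successive independence, self-adjointness, mean zero, being in $L_2(\mathfrak{M})$) are literally the same in both statements.

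Since the Haj\'ek--Renyi statement is phrased for infinite sequences while the corollary is phrased for a finite one, I would either extend the finite family $x_1, \dots, x_n$ to an infinite sequence by setting $x_k = 0$ for $k > n$ (which contributes $\mathrm{var}(x_k) = 0$ to the sum, leaves $s_k = s_n$ stable for $k \geq n$, and preserves all independence requirements), or equivalently observe that the proof of the Haj\'ek--Renyi inequality works verbatim for any finite $n$. Either route produces the Cuculescu-type projection $e := \bigvee_{k=1}^{n} p_k$ and shows that the two conclusions of the theorem specialize to
\[
\tau(e) \leq \frac{\sum_{k=1}^n \mathrm{var}(x_k)}{\lambda^2}, \qquad \bigl\|(1-e)\,|s_n|\,(1-e)\bigr\| \leq \lambda,
\]
which is exactly the claim.

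There is no real obstacle here; the only thing that deserves a brief remark is that in the statement of the corollary the projection $e$ should be taken independent of $n$ in the second bound, and this is automatic from the Haj\'ek--Renyi proof because the projection constructed there (the supremum of the $p_k$) controls $|s_n|$ uniformly in $n$ via the decreasing chain $e_n \leq 1-e$ and Lemma~\ref{lem2}. Thus the entire proof reduces to one line: invoke the Haj\'ek--Renyi theorem with $\alpha_k \equiv 1$.
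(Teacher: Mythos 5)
Your proposal is correct and is exactly how the paper obtains this corollary: it is stated as the specialization of the noncommutative Haj\'ek--Renyi inequality to the constant weights $\alpha_k \equiv 1$, with no separate argument given. Your brief handling of the finite-versus-infinite sequence issue (padding by zeros or running the proof up to $n$) is a reasonable extra touch but does not change the route.
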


The next result can be indeed considered as a (weak version of) Kolmogorov inequality for the maximal sum.

\begin{theorem}[Noncommutative Kolmogorov type inequality]\label{Kol2}
Let $x_1, x_2, \ldots, x_n$ be successively independent self-adjoint random variables in $\mathfrak{M}$ such that $\tau\left(x_k\right) = 0$ for all $k$. Then for any $\lambda > 0$ there exists a projection $e$ such that
\begin{align}\label{Kty}
1 - \frac{2(\lambda + \max_{1 \leq k \leq n} \|x_k\|)^2}{\sum_{k=1}^n {\rm var}(x_k)}\leq \tau(e)\leq \frac{\tau\left( \left(\sum_{k=1}^n x_k\right)^2 \right)}{\lambda^2}.
\end{align}
\end{theorem}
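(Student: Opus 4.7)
The plan is to apply the Cuculescu construction from the previous proof with $\alpha_k\equiv 1$, obtaining projections $\mathbf{1}=e_0\ge e_1\ge\cdots\ge e_n$, mutually orthogonal $p_k=e_{k-1}-e_k$ summing to $e=\mathbf{1}-e_n$. The upper bound $\tau(e)\le\tau(s_n^2)/\lambda^2$ is exactly the preceding Chebyshev-type corollary, so the real content is the lower bound on $\tau(e)$, which I would prove by a noncommutative version of the classical stopping-time argument: the role of the event $\{T=k\}$ is played by $p_k$ and the role of $\{T>n\}$ by $e_n$.

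Using $\mathbf{1}=e_n+\sum_k p_k$ and cyclicity of the trace,
\[
\tau(s_n^2)=\tau(e_n s_n^2)+\sum_{k=1}^n\tau(p_k s_n^2).
\]
Inside each summand I substitute $s_n=s_k+(s_n-s_k)$ and expand. Since $p_k s_k\in W^*(x_1,\ldots,x_k)$ and $s_n-s_k\in W^*(x_{k+1},\ldots,x_n)$, successive independence together with $\tau(x_l)=0$ annihilates both cross terms exactly as in (\ref{equ2}); iterating successive independence gives the block factorization $\tau(ab)=\tau(a)\tau(b)$ for $a\in W^*(x_1,\ldots,x_k)$ and $b\in W^*(x_{k+1},\ldots,x_n)$, so that $\tau(p_k(s_n-s_k)^2)=\tau(p_k)\sum_{l>k}{\rm var}(x_l)$, giving
\[
\tau(p_k s_n^2)=\tau(p_k s_k^2)+\tau(p_k)\sum_{l>k}{\rm var}(x_l).
\]

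The main obstacle is the operator estimate $\|s_k p_k\|\le\lambda+M$ with $M=\max_k\|x_k\|$, the analogue of the classical bound $|s_T|\le\lambda+M$ on the stopping-time event. I would invoke the (implicit) hypothesis $s_j s_{j-1}=s_{j-1}s_j$ stated in the abstract; iterating it makes all $s_j$ and hence all $x_j$ commute pairwise, so each $e_{k-1}$, built by spectral calculus from $|s_1|,\ldots,|s_{k-1}|$, lies in the abelian algebra they generate and commutes with $s_{k-1}$. Under this commutativity, the spectral bound $e_{k-1}|s_{k-1}|e_{k-1}\le\lambda e_{k-1}$ squares to $e_{k-1}s_{k-1}^2 e_{k-1}\le\lambda^2 e_{k-1}$, so $\|s_{k-1}e_{k-1}\|\le\lambda$ and, since $p_k\le e_{k-1}$, also $\|s_{k-1}p_k\|\le\lambda$. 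Combined with $\|x_k p_k\|\le M$, the triangle inequality yields $\|s_k p_k\|\le\lambda+M$, hence $\tau(p_k s_k^2)=\|s_k p_k\|_2^2\le(\lambda+M)^2\tau(p_k)$; the same argument produces $\tau(e_n s_n^2)\le\lambda^2\tau(e_n)$. Without this implicit commutativity the spectral bound on $|s_{k-1}|$ does not transfer to $s_{k-1}^2$ and the argument collapses, which is why I flag this as the hard step.

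Assembling the bounds, with $V=\sum_l{\rm var}(x_l)=\tau(s_n^2)$ and $t=\tau(e)=1-\tau(e_n)$, one gets $V\le(\lambda+M)^2 t+Vt+\lambda^2(1-t)$, which rearranges to $(V-\lambda^2)(1-t)\le(\lambda+M)^2 t\le(\lambda+M)^2$. If $V\ge 2(\lambda+M)^2$ then in particular $V\ge 2\lambda^2$, so $V-\lambda^2\ge V/2$ and $1-t\le 2(\lambda+M)^2/V$, which is the stated lower bound. If $V<2(\lambda+M)^2$, the right-hand side of (\ref{Kty}) is nonpositive and the inequality holds trivially.
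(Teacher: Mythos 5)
Your overall skeleton (Cuculescu projections, stopping decomposition $\tau(s_n^2)=\tau(e_ns_n^2)+\sum_k\tau(p_ks_n^2)$, factorization of $\tau\bigl(p_k(s_n-s_k)^2\bigr)$, and the final algebra) is sound, and the independence computations you need do follow from successive independence plus traciality (though your blanket claim of ``block factorization'' for the full algebras $W^*(x_1,\ldots,x_k)$ and $W^*(x_{k+1},\ldots,x_n)$ is more than successive independence gives; you only need it for the specific polynomials $p_k$, $s_k$, $(s_n-s_k)^2$, where cyclicity of $\tau$ saves you). The genuine gap is exactly the step you flag: the estimates $\tau(p_ks_k^2)\le(\lambda+M)^2\tau(p_k)$ and $\tau(e_ns_n^2)\le\lambda^2\tau(e_n)$. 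With your construction (spectral cut applied to $|s_k|$) these do not follow, since $e|s|e\le\lambda e$ only gives $es^2e\ge(e|s|e)^2$, the wrong direction. Your patch is illegitimate on two counts. First, the theorem as stated has \emph{no} commutativity hypothesis; the hypothesis $s_ks_{k-1}=s_{k-1}s_k$ appears only in the abstract's sharper version (without the factor $2$), which the paper treats separately in Remark 2.6. You cannot import it to prove the stated result. Second, even granting that hypothesis, your inference fails: $s_ks_{k-1}=s_{k-1}s_k$ for all $k$ only says $x_k$ commutes with $s_{k-1}$; it does not make the $x_j$ commute pairwise (e.g.\ $x_3$ need only commute with $x_1+x_2$, not with $x_1$ and $x_2$ separately), nor does it make $e_{k-1}$ — built from $|s_1|,\ldots,|s_{k-1}|$ in a nested way — commute with $s_{k-1}$.

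The paper's proof circumvents the problem at the level of the construction: it defines $e_k:=e_{k-1}\mathbf{1}_{[0,\lambda^2]}\bigl(e_{k-1}\,|s_k|^2\,e_{k-1}\bigr)$, i.e.\ the spectral cut is applied to the \emph{squared} modulus. Then $p_k\,s_{k-1}^2\,p_k\le\lambda^2 p_k$ and $e_n s_n^2 e_n\le\lambda^2 e_n$ hold by construction, with no commutativity at all; the price is that the increment must be handled via $|s_{k-1}+x_k|^2\le 2\bigl(|s_{k-1}|^2+|x_k|^2\bigr)$, and this is precisely where the factor $2$ in \eqref{Kty} comes from (the commutative-type hypothesis in Remark 2.6 is used only to replace this by $\bigl(|s_{k-1}|+|x_k|\bigr)^2$ and remove the $2$). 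To repair your argument, replace your cut on $|s_k|$ by the cut on $|s_k|^2$, prove $\|s_kp_k\|_2^2=\tau(p_k|s_{k-1}+x_k|^2p_k)\le 2(\lambda+M)^2\tau(p_k)$ by the parallelogram-type inequality instead of the triangle inequality, and then your stopping-time bookkeeping goes through and yields the stated constant $2$; you would also need to recheck the upper bound $\tau(e)\le\tau(s_n^2)/\lambda^2$ for this modified projection rather than quoting the Chebyshev-type corollary verbatim, since that corollary was proved for the unsquared construction.
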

\begin{proof}
Set $c := \max_{1 \leq k \leq n} \|x_k\|$. Let $s_k := \sum_{i=1}^k x_i$ and $s_0 = 0$.\\
We set
\[e_0:=1, \quad e_k := e_{k-1}\textbf{1}_{[0, \lambda^2]}\left( e_{k-1}\, |s_k|^2\, e_{k-1} \right) \quad \text{and} \quad p_k := e_{k-1} - e_k\] for $1 \leq k \leq n$. It is clear that $(e_k)_k$ is a decreasing sequence of projections such that $e_k\, |s_k|^2\,e_k \leq \lambda^2 e_k$ for all $k$.\\
Moreover, $ p_j \wedge e_k=0$ for all $1 \leq k \leq n$ and all $1 \leq j \leq k$.
To see this, let $j \leq k$ and $\xi \in p_j(\mathcal{H}) \cap e_k(\mathcal{H})$ be a unit element. Then $\langle e_{j-1}\,|s_j|^2\,e_{j-1}\xi, \xi \rangle > \lambda^2$ and $\langle e_{j-1}\,|s_j|^2\,e_{j-1}\xi, \xi \rangle \leq \lambda^2$, since $\xi \in e_k(\mathcal{H}) \subseteq e_j(\mathcal{H})$, which gives us a contradiction.\\
In addition, $\vee_{j=1}^k p_j \leq \vee_{j=1}^k e_j^\perp = e_k^\perp$ since $p_j = e_{j-1} - e_j \leq e_j^\perp$ and $(e_j^\perp)_j$ is increasing.\\
Since $e_{k-1} = e_k + p_k$, we have
\begin{align}\label{equ1}
\tau\left(e_{k-1}\, |s_k|^2\,e_{k-1} \right)=\tau(|s_k|e_{k-1}|s_k|) = \tau(|s_k|(e_k + p_k)|s_k|)=\tau\left((e_k + p_k)\, |s_k|^2\,(e_k + p_k) \right).
\end{align}
Thus,
\begin{align*}
&\hspace{-0.5cm}\tau\left( \left| s_{k-1}e_{k-1} \right|^2 \right) + \tau\left( e_n\right){\rm var}(x_k)\\
&\leq \tau\left( \left| s_{k-1}e_{k-1} \right|^2 \right) + \tau\left( e_{k-1}\right){\rm var}(x_k) \qquad \qquad \qquad (\text{since $e_n \leq e_{k-1}$})\\
&= \tau\left( \left| s_{k-1}e_{k-1} \right|^2 \right) + \tau\left( e_{k-1}\right)\tau\left(x_k^2 \right) \qquad \qquad \qquad (\text{as $\tau\left( x_k\right)=0$})\\
&= \tau\left( \left| s_{k-1}e_{k-1} \right|^2 \right) + \tau\left(\left| x_ke_{k-1}\right|^2 \right)\\
& \qquad \qquad (\text{by the successive independence and the tracial property of $\tau$})\\
&= \tau\left(e_{k-1}s_{k-1}^2e_{k-1}\right) + \tau\left(e_{k-1}x_k^2e_{k-1}\right) + 2\tau\left(e_{k-1}s_{k-1}\right)\tau(x_k)\\
& \qquad\qquad\qquad\qquad\qquad\qquad\qquad\qquad\qquad \qquad (\text{as $\tau(x_k)=0$ for all $k$})\\
&=\tau\left(e_{k-1}s_{k-1}^2e_{k-1}\right) + \tau\left(e_{k-1}x_k^2e_{k-1}\right) + \tau\left(e_{k-1}s_{k-1}x_k\right) + \tau\left(s_{k-1}e_{k-1}x_k\right)
\end{align*}
by the successive independence. Thus,
\begin{align} \label{msm3}
&\tau\left( \left| s_{k-1}e_{k-1} \right|^2 \right) + \tau\left( e_n\right){\rm var}(x_k)\nonumber \\
&\leq \tau\left(e_{k-1}s_{k-1}^2e_{k-1}\right) + \tau\left(e_{k-1}x_k^2e_{k-1}\right) + \tau\left(e_{k-1}s_{k-1}x_ke_{k-1}\right) + \tau\left(e_{k-1}x_ks_{k-1}e_{k-1}\right) \nonumber\\
&= \tau\left(\left(e_{k-1}s_{k-1} + e_{k-1}x_k\right)\left(s_{k-1}e_{k-1} + x_ke_{k-1}\right) \right) \nonumber\\
&= \tau\left(\left|s_{k-1}e_{k-1} + x_ke_{k-1} \right|^2 \right) \nonumber\\
&= \tau \left(\left| s_ke_{k-1} \right|^2\right) \nonumber\\
&= \tau\left( e_{k-1}\,|s_k|^2\,e_{k-1}\right) \nonumber\\
&= \tau\left((e_k + p_k)\, |s_k|^2\,(e_k + p_k) \right) \qquad \qquad \qquad \qquad \qquad\qquad \qquad \qquad \qquad (\text{by \eqref{equ1}})\nonumber \\
&= \tau\left( |s_ke_k|^2 \right) + \tau\left( |s_kp_k |^2 \right) + \tau\left(e_k\,|s_k|^2\,p_k \right) + \tau\left(p_k\,|s_k|^2\,e_k \right) \nonumber\\
&= \tau\left( |s_ke_k|^2 \right) + \tau\left(|(s_{k-1}+ x_k)p_k |^2 \right) + \tau\left(|s_k|^2\,p_ke_k \right) + \tau\left(|s_k|^2\,e_kp_k \right) \nonumber\\
&= \tau\left( |s_ke_k|^2 \right) + \tau\left( |\left(s_{k-1}p_k + x_kp_k \right) |^2 \right)\qquad \qquad \qquad \qquad (\text{since $e_kp_k = p_ke_k =0$})\nonumber \\
&= \tau\left( |s_ke_k|^2 \right) + \tau\left(p_k \left(s_{k-1} + x_k\right)^2p_k \right) \nonumber \\
&= \tau\left( |s_ke_k|^2 \right) + \tau\left(p_k \left|s_{k-1} + x_k\right|^2p_k \right) \\
&\leq \tau\left( |s_ke_k|^2 \right) + 2\tau\left(p_k(\left|s_{k-1}\right|^2 + \left|x_k\right|^2)p_k \right) \nonumber\\
&\leq \tau\left( |s_ke_k|^2 \right) + 2\tau\left(p_k\,|s_{k-1}|^2\,p_k \right) + 2c^2\tau\left(p_k \right) \nonumber\\
&\leq \tau\left( |s_ke_k|^2 \right) + 2\tau\left(p_k \left(|s_{k-1}| + c\right)^2p_k \right)\nonumber
\end{align}

Thus
\begin{align}\label{in6}
&\tau\left( \left| s_{k-1}e_{k-1} \right|^2 \right) + \tau\left( e_n\right){\rm var}(x_k) \nonumber\\
&\leq \tau\left( |s_ke_k|^2 \right) + 2\tau\left(p_ks_{k-1}^2p_k \right) + 2c^2\tau\left(p_k \right) + 4c\tau\left(p_k|s_{k-1}|p_k \right)\qquad (\text{since $|x_k| \leq c$})\nonumber\\
&\leq \tau\left( |s_ke_k|^2 \right) + 2(\lambda + c)^2 \tau\left(p_k \right),
\end{align}
where to obtain the last inequality we remark that
\begin{eqnarray}
&& p_ks_{k-1}^2p_k \leq \lambda^2p_k \label{in7} \\
&& p_k\,|s_{k-1}|\,p_k \leq \lambda p_k.\label{in8}
\end{eqnarray}
To show \eqref{in7}, we may choose $\xi \in \mathcal{H}$ such that $\|p_k\xi\| = 1$. So
\begin{eqnarray*}
\langle p_ks_{k-1}^2p_k \xi, \xi \rangle = \langle |s_{k-1}|^2e_{k-2}p_k \xi, e_{k-2}p_k\xi \rangle \leq \lambda^2,
\end{eqnarray*}
as $p_k \xi \in e_{k-1}(\mathcal{H}) \subseteq e_{k-2}(\mathcal{H})$. The inequality \eqref{in8} follows from
\begin{eqnarray*}
\left(p_k\,|s_{k-1}|\,p_k \right)^2 = p_k\,|s_{k-1}|\,p_k\,|s_{k-1}|\,p_k \leq p_k\,s_{k-1}^2p_k \leq \lambda^2p_k \qquad (\text{by~} \eqref{in7}).
\end{eqnarray*}
by taking the square roots.\\
It follows from \eqref{in6} that
\begin{align*}
\tau\left( e_n\right){\rm var}(x_k) \leq \tau\left( |s_ke_k|^2 \right) - \tau\left( \left| s_{k-1}e_{k-1} \right|^2 \right) + 2(\lambda + c)^2 \tau\left(p_k \right).
\end{align*}
Now by summing and telescoping, we deduce that
\begin{eqnarray}\label{Kty2}
\tau\left( e_n\right)\sum_{k=1}^n {\rm var}(x_k) &\leq & \tau\left( |s_ne_n|^2 \right) + 2(\lambda + c)^2\, \tau\left(\bigvee_{k=1}^np_k \right)\nonumber\\
&\leq & \lambda^2 \tau\left(e_n \right) + 2(\lambda + c)^2\, \tau\left(1 - e_n \right)\nonumber\\
&\leq & 2(\lambda + c)^2,
\end{eqnarray}
from which, by setting $e = 1 - e_n$, we get (\ref{Kty}).\\
It should be noted that the second inequality can be derived from the following.
\begin{eqnarray*}
\langle e_n s_n^2 e_n \xi, \xi \rangle &=& \langle |s_n|^2 e_n\, \xi, e_n\xi \rangle\\
&= & \langle e_{n-1}\,|s_n|^2\, e_{n-1}e_n \xi, e_n \xi \rangle \quad \left(\text{as $e_{n-1}e_n = e_n$}\right)\\
&\leq & \lambda^2 \langle e_n \xi, e_n \xi \rangle
\end{eqnarray*}
when $\|e_n\xi\|=1$, since $e_n \xi \in \textbf{1}_{[0, \lambda^2]}\left( e_{n-1} |s_n| e_{n-1} \right)\left(\mathcal{H} \right)$. Hence $|s_ne_n|^2 = e_n s_n^2 e_n \leq \lambda^2 e_n$, and so $\tau\left( |s_ne_n|^2 \right) \leq \lambda^2 \tau\left(e_n \right)$.\\
Furthermore, we by \eqref{GH1} have
\begin{eqnarray*}
\tau(e) = \tau\left(1_{(\lambda^2, \infty)}\left( e_{n-1}\, |s_n|^2\, e_{n-1} \right) \right) \leq \frac{\tau\left( e_{n-1}\, |s_n|^2\, e_{n-1} \right)}{\lambda^2} \leq \frac{\tau\left(|s_n|^2 \right)}{\lambda^2}.
\end{eqnarray*}
\end{proof}
\begin{remark}\label{rem2}
We may derive the noncommutative version of Kolmogorov-type inequality with the same bound if the hypothesis $s_ks_{k-1} = s_{k-1}s_k$ satisfies for all $k$, where $s_k = \sum_{i=1}^k x_i$. In fact, from \eqref{msm3} we have
\begin{align*}
\tau\left( \left| s_{k-1}e_{k-1} \right|^2 \right) + \tau\left( e_n\right){\rm var}(x_k)&\leq \tau\left( |s_ke_k|^2 \right) + \tau\left(p_k \left|s_{k-1} + x_k\right|^2p_k \right) \\
&\leq \tau\left( |s_ke_k|^2 \right) + \tau\left(p_k \left(|s_{k-1}| + |x_k|\right)^2p_k \right)
\end{align*}
since by passing to the commutative algebra generated by two commuting self-adjoint elements $x_k$ and $s_{k-1}$, we have
$|s_{k-1} + x_k |^2\leq (|s_{k-1}| + |x_k|)^2$.\\
Continuing the previous argument we deduce
\begin{align*}
\tau(e_n) \leq \frac{(\lambda + c)^2}{\sum_{k=1}^n {\rm var}(x_k)}.
\end{align*}
\end{remark}
In the next result we deduce the commutative version of the Kolmogorov type inequality.
\begin{corollary}\cite[Theorem 3.1.7]{G}
Let $X_1, X_2, \ldots, X_n$ be independent random variables with the mean $0$ and $\sup_k |X_k| \leq C$ for some constant $C > 0$. Then
\begin{align*}
\mathbb{P}\left(\max_{1 \leq k \leq n} \left| S_k \right| > t \right) \geq 1 - \frac{(t + C)^2}{\sum_{k=1}^n {\rm Var}\, X_k}
\end{align*}
for any $t > 0$.
\end{corollary}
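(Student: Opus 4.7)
The plan is to specialize the noncommutative Kolmogorov-type inequality (Theorem~\ref{Kol2}), or rather its refinement in Remark~\ref{rem2}, to the classical commutative setting. First I would regard $\mathfrak{M}=L^\infty(\Omega,\mathcal{F},\mathbb{P})$ as a noncommutative probability space with trace $\tau=\mathbb{E}$, so that the classical independence of the bounded random variables $X_1,\ldots,X_n$ automatically yields successive independence of the $X_k$ viewed as self-adjoint elements of $\mathfrak{M}$. Since $\mathfrak{M}$ is abelian, the commutation hypothesis $S_kS_{k-1}=S_{k-1}S_k$ of Remark~\ref{rem2} is free of charge. This is essential, since the factor $2$ in Theorem~\ref{Kol2} would otherwise weaken the conclusion to $1-2(t+C)^2/\sum_k\mathrm{Var}(X_k)$, which is not what the corollary claims.

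The next step is to unwind the projections $e_k$ produced by the proof of Theorem~\ref{Kol2} and recognize them as indicator functions of the natural events. With $\lambda=t$ and $c=\sup_k\|X_k\|_\infty\leq C$, set
\[
A_k=\{\omega\in\Omega:|S_j(\omega)|\leq t\text{ for every }1\leq j\leq k\},\qquad A_0=\Omega.
\]
I would prove by induction that $e_k=\textbf{1}_{A_k}$: granted $e_{k-1}=\textbf{1}_{A_{k-1}}$, the product $e_{k-1}S_k^2e_{k-1}$ coincides with multiplication by $S_k^2\,\textbf{1}_{A_{k-1}}$, whose spectral projection $\textbf{1}_{[0,t^2]}$ is the indicator of $A_{k-1}^c\cup(A_{k-1}\cap\{|S_k|\leq t\})$; multiplying by $e_{k-1}=\textbf{1}_{A_{k-1}}$ then recovers exactly $\textbf{1}_{A_k}$. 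Consequently $e:=1-e_n=\textbf{1}_{\{\max_{1\leq k\leq n}|S_k|>t\}}$, and hence $\tau(e)=\mathbb{P}(\max_{1\leq k\leq n}|S_k|>t)$.

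Finally, invoking the sharpened conclusion of Remark~\ref{rem2} gives $\tau(e_n)\leq(t+C)^2/\sum_{k=1}^n\mathrm{Var}(X_k)$, and passing to complements delivers the desired lower bound. I do not foresee a substantive obstacle: the only delicate step is the inductive bookkeeping identifying $e_k$ with $\textbf{1}_{A_k}$, which rests on the elementary fact that spectral projections of multiplication operators on $L^\infty(\Omega)$ are themselves multiplications by indicator functions.
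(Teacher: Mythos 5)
Your proposal is correct and follows exactly the route the paper intends: the corollary is the commutative specialization of the Kolmogorov-type result, where in $L^\infty(\Omega,\mathbb{P})$ the Cuculescu projections $e_k$ become the indicators of $\{|S_j|\le t,\ 1\le j\le k\}$, so $1-e_n$ is the indicator of $\{\max_{1\le k\le n}|S_k|>t\}$. You are also right that one must invoke the refinement of Remark~\ref{rem2} (automatic here since the algebra is abelian) rather than Theorem~\ref{Kol2} itself, so as to obtain the bound $(t+C)^2$ instead of $2(t+C)^2$.
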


In \cite{BAT}, a sufficient condition is stated for the convergence of a independent sequence of random variables.
In the next result, we show that the condition is necessary under some conditions.
\begin{corollary}
If $\left(x_n\right)_{n \in \mathbb{N}}$ is a successively independent sequence of self-adjoint random variables in $\mathcal{M}$ such that $\tau\left(x_k\right) = 0$ for all $k$ and $\sum_{n=1}^\infty x_n$ is convergent, then $\sum_{n=1}^\infty {\rm var}(x_n)$ is convergent too.
\end{corollary}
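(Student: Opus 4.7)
The plan is to argue by contradiction: assume $\sum_{n=1}^\infty {\rm var}(x_n) = +\infty$ and derive, via Theorem \ref{Kol2}, a violation of the convergence of $\sum x_n$. The tail sequence $x_{N+1}, x_{N+2}, \ldots, x_M$ inherits successive independence and zero mean, so the Noncommutative Kolmogorov type inequality applies to the shifted partial sums $s_k - s_N$ for $N < k \leq M$. This yields, for each $\lambda > 0$, a projection $e_{N,M}$ with
\[
1 - \frac{2(\lambda + c)^2}{\sum_{k=N+1}^M {\rm var}(x_k)} \;\leq\; \tau(e_{N,M}) \;\leq\; \frac{\tau((s_M - s_N)^2)}{\lambda^2},
\]
where $c := \sup_k \|x_k\|$, assumed finite (a uniform bound is implicit in order to apply Theorem \ref{Kol2} with a universal constant). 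For fixed $\lambda$ and $N$, since $\sum {\rm var}(x_n) = +\infty$ one may choose $M$ large enough that the left-hand side exceeds $1/2$, so $\tau(e_{N,M}) \geq 1/2$.

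Next I would exploit the convergence of $\sum x_n$ to bound $s_M - s_N$ from above and produce the contradiction. Convergence of the series means $(s_n)$ is Cauchy in the appropriate topology. In the $L_2$ interpretation, successive independence with $\tau(x_k) = 0$ gives
\[
\tau((s_M - s_N)^2) \;=\; \sum_{k=N+1}^M {\rm var}(x_k),
\]
so the Cauchy property of $(s_n)$ in $L_2$ is already equivalent to the summability of the variances, and the corollary follows directly. In the more delicate almost-uniform (or in-measure) interpretation, I would combine the maximal property $\|(1 - e_{N,M})\,|s_k - s_N|\,(1 - e_{N,M})\| \leq \lambda$ for all $N < k \leq M$ (which follows from the construction of $e_{N,M} = 1 - e_n$ inside the proof of Theorem \ref{Kol2}) with the Cauchy-type projection $q$ supplied by a.u.\ convergence, on which every $|s_k - s_N|$ is simultaneously smaller than $\lambda$ for $N$ large. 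This forces $q$ to sit (essentially) beneath $1 - e_{N,M}$, whence $\tau(e_{N,M}) \leq 1 - \tau(q) < 1/2$, contradicting $\tau(e_{N,M}) \geq 1/2$.

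The main obstacle is the ambiguity of the word \emph{convergent} in the statement: in $L_2$ the conclusion reduces to a direct variance identity and Theorem \ref{Kol2} is overkill, whereas under almost-uniform or in-measure convergence---the only interpretation that makes the Kolmogorov machinery genuinely necessary---one must carefully interlace the maximal projection $e_{N,M}$ with the Cauchy projection $q$, using the lattice operations on $\mathcal{P}$ and the subadditivity \eqref{NE}. A secondary issue is supplying the uniform bound $\sup_n \|x_n\| < \infty$ needed to apply Theorem \ref{Kol2} with a finite $c$; this can either be adopted as an explicit hypothesis or, as in the classical Kolmogorov three-series theorem, obtained by first truncating the $x_n$ and handling the truncation error separately.
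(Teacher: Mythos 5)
Your first half coincides with the paper's argument: argue by contradiction, apply Theorem \ref{Kol2} to the tail $x_{N+1},\dots,x_{N+M}$, and use divergence of $\sum_k{\rm var}(x_k)$ to force the exceptional projection to have trace close to $1$. Where the proposal breaks down is that you never close the argument under any interpretation of ``convergent'' that you yourself deem relevant. Under the almost-uniform/in-measure reading, the decisive step is exactly the one you leave as a sketch: from $\|\,|s_k-s_N|\,q\|\le\lambda$ for all $k$ you can conclude $q\wedge p_k=0$ for each individual Cuculescu increment $p_k=e_{k-1}-e_k$, but this does \emph{not} yield that $q$ sits (even ``essentially'') under $e_n=1-e_{N,M}$: the projection lattice is nondistributive, and although here $e_n^{\perp}=\sum_k p_k$ with mutually orthogonal $p_k$, a unit vector $\xi\in q\wedge e_n^{\perp}$ decomposes as $\sum_k p_k\xi$ where the components $p_k\xi$ need not lie in the range of $q$, so the defining property of $q$ cannot be applied to them and no contradiction results. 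So the hedge ``(essentially) beneath $1-e_{N,M}$'' is hiding a genuine missing argument, not a routine verification.

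The paper avoids this entirely by reading ``convergent'' as convergence in the operator norm of $\mathfrak{M}$ (the $x_n$ are elements of the von Neumann algebra). Then one chooses $N$ with $\|s_{N+m}-s_N\|\le\epsilon$ for all $m$, and an easy induction shows every Cuculescu projection built from $|s_{N+m}-s_N|^2$ at level $\epsilon^2$ equals $\textbf{1}$ (nothing is ever cut, since $e_{N,m-1}|s_{N+m}-s_N|^2e_{N,m-1}\le\epsilon^2$), so the exceptional projection is $0$, contradicting the lower bound $1-\frac{2(\epsilon+c)^2}{\sum_{k>N}{\rm var}(x_k)}$ obtained from the divergence of the variances. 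This reading also dissolves your secondary worry: norm convergence forces $x_n=s_n-s_{n-1}\to 0$ in norm, so $c=\sup_n\|x_n\|<\infty$ is automatic and no truncation or extra hypothesis is needed. Your observation that the $L_2$ reading trivializes the statement (since $\tau((s_M-s_N)^2)=\sum_{k=N+1}^M{\rm var}(x_k)$) is correct, but it only underlines that the interpretation you would actually need to handle is the one you did not carry out.
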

\begin{proof}
Suppose that for some $c > 0$ $\| x_n \| \leq c$ for all $n$. Set $s_n := \sum_{k=1}^n x_k$. To achieve a contradiction, assume that $\sum_{n\in \mathbb{N}} {\rm var}(x_n)=\infty$. Let $\epsilon >0$ be given and $n \in \mathbb{N}$ be arbitrary. For every $m \in \mathbb{N}$, applying Theorem \ref{Kol2} to the finite sequence $x_{n+1}, x_{n+2}, \ldots, x_{n+m}$, there exists a projection $e_{n,m}$ such that
\begin{align}\label{msm2}
1 \geq \tau(e_{n,m}^\perp) \geq 1 - \frac{2\epsilon^2 + 2c^2}{\sum_{k=n+1}^m {\rm var}(x_k)},
\end{align}
where $e_{n,m} = \textbf{1}_{[0, \epsilon^2]}\left( e_{n, m-1}\, |s_{n+m} - s_n|^2\, e_{n, m-1} \right)$ with $e_{n, 0} = 1$. Our construction shows that $(e_{n,m})_m$ is a decreasing sequence of projections. Letting $m \rightarrow \infty$ in \eqref{msm2} and noticing that $(e_{n,m}^\perp)_m$ is increasing, we deduce from the normality of $\tau$ that $\tau\left( \bigvee_{m \in \mathbb{N}} e_{n,m}^\perp \right)=\lim_m\tau(e_{n,m}^\perp)= 1=\tau(1)$. Therefore
\begin{align}\label{sad1}
\bigvee_{m \in \mathbb{N}} e_{n,m}^\perp = 1
\end{align}
for all $n$.\\
As $(s_n)_n$ is Cauchy, there is $N \in \mathbb{N}$ such that
\begin{align*}
\| (s_{N+m} - s_N)^2 \| = \| s_{N+m} - s_N\|^2 \leq \epsilon^2
\end{align*}
for all $m \in \mathbb{N}$. Let $ \xi \in e_{N,m}^\perp(\mathcal{H})$ be a unit vector. Hence, by definition of the spectral projection $e_{N,m}^\perp$ we have $\langle |s_{N+m} - s_N|^2\, e_{N,m-1}\xi, e_{N,m-1}\xi \rangle > \epsilon^2$. On the other hand, from $\| (s_{N+m} - s_N)^2 \| \leq \epsilon^2$ and using the Cauchy--Schwarz inequality, we get
\begin{align*}
\langle |s_{N+m} - s_N|^2\, e_{N,m-1}\xi, e_{N,m-1}\xi \rangle \leq \epsilon^2,
\end{align*}
which is impossible. Hence $e_{N,m}^\perp(\mathcal{H}) = 0$. Thus, $ \bigvee_{m \in \mathbb{N}} e_{N,m}^\perp = 0$ contradicting \eqref{sad1}.
\end{proof}

\section{Noncommutative Etemadi inequality}

To establish a noncommutative version of the Etemadi inequality we need the following simple lemma.
\begin{lemma}\label{lem1}
Let $p$, $q$ and $r$ be projections in $\mathfrak{M}$ satisfying $p \leq q$ and $p \leq r$. Then $\tau(p) \leq \tau\left(qr\right)$.\\
In particular, for arbitrary projections $p, q$ it holds that $\tau\left(p \wedge q\right) \leq \tau(pq)$.
\end{lemma}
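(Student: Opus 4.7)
The plan is to exploit the standard fact that for projections, $p \leq q$ is equivalent to $pq = qp = p$, together with the tracial property of $\tau$.

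First I would note that $p \leq q$ implies $qp = pq = p$, and similarly $p \leq r$ implies $rp = pr = p$. In particular this gives the identity $qpq = p$, since $qpq = (qp)q = pq = p$. Next, from $p \leq r$, I would conjugate by $q$ (which preserves the operator order, since for any $a \geq 0$ one has $q^{*}aq \geq 0$) to obtain
\begin{equation*}
p \;=\; qpq \;\leq\; qrq.
\end{equation*}

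Applying $\tau$ to this inequality and using the tracial property together with $q^{2}=q$ then yields
\begin{equation*}
\tau(p) \;\leq\; \tau(qrq) \;=\; \tau(q^{2}r) \;=\; \tau(qr),
\end{equation*}
which is the desired inequality.

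For the \emph{in particular} statement, I would simply specialize the first part: the projection $p \wedge q$ satisfies $p \wedge q \leq p$ and $p \wedge q \leq q$ by definition of the meet, so substituting $p \wedge q$ for $p$ (and replacing $q, r$ by $p, q$) in the result just proved gives $\tau(p \wedge q) \leq \tau(pq)$.

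There is no serious obstacle here; the only mild subtlety is that $qr$ need not be self-adjoint, but this is sidestepped because the tracial property lets us rewrite $\tau(qr) = \tau(qrq)$, and it is the positive operator $qrq$ against which the comparison with $p$ is carried out.
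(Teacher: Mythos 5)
Your proof is correct. It is close in spirit to the paper's, but the routes differ in a small way worth noting: the paper never uses the projection identity $pq=qp=p$; its proof is the one-line trace chain $\tau(p)=\tau\left(p^{1/2}pp^{1/2}\right)\leq\tau\left(p^{1/2}rp^{1/2}\right)=\tau\left(r^{1/2}pr^{1/2}\right)\leq\tau(qr)$, i.e.\ two applications of trace monotonicity under conjugation (by $p^{1/2}=p$ and by $r^{1/2}$) plus cyclicity, working entirely at the trace level. You instead convert $p\leq q$ into the algebraic fact $qpq=p$ and establish the operator inequality $p=qpq\leq qrq$ before applying the trace once. Your version yields the slightly stronger operator statement $p\leq qrq$, while the paper's argument is marginally more general in that it only needs $p$ to be a projection ($q$ and $r$ could be arbitrary positive elements dominating $p$), whereas your identity $qpq=p$ uses that $q$ is a projection as well. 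For the lemma as stated (all three are projections) both arguments are complete, and your handling of the ``in particular'' part by specializing to $p\wedge q\leq p$, $p\wedge q\leq q$ matches the paper's.
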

\begin{proof}
\begin{eqnarray*}
\tau(p) = \tau\left(p^{\frac{1}{2}}pp^{\frac{1}{2}}\right) \leq \tau\left( p^{\frac{1}{2}}rp^{\frac{1}{2}}\right) = \tau\left( r^{\frac{1}{2}}pr^{\frac{1}{2}}\right) \leq \tau\left(qr\right).
\end{eqnarray*}
\end{proof}

\begin{theorem}[Noncommutative Etemadi inequality]\label{Etem}
Let $x_1, x_2, \ldots, x_n$ be weakly fully independent self-adjoint random variables in $\mathfrak{M}$ such that $s_ks_n =s_n s_k$, where $s_k = \sum_{j=1}^k x_j$ for all $1 \leq k \leq n$. Then for every $\lambda > 0$ there exists a projection $p$ in $\mathfrak{M}$ such that
\begin{align*}
\tau (p)\leq 2\tau\left(\textbf{{\rm \textbf{1}}}_{[\lambda, \infty)}\left(|s_n|\right)\right) + \max_{1 \leq k \leq n} \tau\left(\textbf{{\rm \textbf{1}}}_{[\lambda, \infty)}\left(|s_k|\right)\right)\leq 3\max_{1 \leq k \leq n} \tau\left( \textbf{{\rm \textbf{1}}}_{[\lambda, \infty)} \left( |s_k|\right) \right).
\end{align*}
\begin{eqnarray*}
\tau(1-p) \leq \tau\left(\textbf{{\rm \textbf{1}}}_{[0, 3\lambda)}(|x_1|) \right).
\end{eqnarray*}
\end{theorem}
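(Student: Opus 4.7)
My plan is to mimic Cuculescu's filtering construction, with threshold $3\lambda$, paralleling the Haj\'ek--Renyi proof above. Set $e_0=\mathbf{1}$ and inductively
\[
e_k \;=\; \mathbf{1}_{[0,3\lambda)}\bigl(e_{k-1}\,|s_k|\,e_{k-1}\bigr)\,e_{k-1},\qquad p_k \;=\; e_{k-1}-e_k,\qquad p \;=\; \mathbf{1}-e_n \;=\; \sum_{k=1}^n p_k,
\]
where the $p_k$ are mutually orthogonal projections. The same spectral/Hölder--McCarthy argument used in the Haj\'ek--Renyi proof yields $p_k s_k^2 p_k \geq 9\lambda^2\,p_k$, so $\|s_k\xi\|\geq 3\lambda$ for every unit $\xi\in p_k(\mathcal{H})$. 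The second assertion of the theorem is then immediate: monotonicity of $(e_k)$ gives $\mathbf{1}-p = e_n \leq e_1 = \mathbf{1}_{[0,3\lambda)}(|s_1|) = \mathbf{1}_{[0,3\lambda)}(|x_1|)$.

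Writing $r_j = \mathbf{1}_{[\lambda,\infty)}(|s_j|)$ and $q_k = \mathbf{1}_{[2\lambda,\infty)}(|s_n-s_k|)$, I observe that $s_n$ lies in the commutant of $W^*(s_1,\ldots,s_k)\ni p_k$ (since $s_n$ commutes with each $s_j$ for $j\leq k$), so $r_n$ commutes with every $p_k$ and $pr_n,\,pr_n^\perp$ are projections. This lets me split
\[
\tau(p) \;=\; \tau(pr_n)+\tau(pr_n^\perp) \;\leq\; \tau(r_n) + \sum_{k=1}^n \tau(p_k r_n^\perp),
\]
reducing the problem to Cuculescu-type control of each $\tau(p_k r_n^\perp)$. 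The commutativity $s_k s_n = s_n s_k$ makes $W^*(s_k,s_n-s_k)$ commutative, and pointwise reasoning there turns the classical inclusion $\{|s_k|\geq 3\lambda,\,|s_n|\leq\lambda\}\subseteq\{|s_n-s_k|\geq 2\lambda\}$ into the operator inequality $\tilde r_k\,r_n^\perp \leq q_k$ (with $\tilde r_k = \mathbf{1}_{[3\lambda,\infty)}(|s_k|)$); the triangle inequality $|s_n-s_k|\leq|s_n|+|s_k|$ similarly yields $q_k\leq r_n + r_k$, hence $\tau(q_k)\leq\tau(r_n)+\tau(r_k)$. Weak full independence of $W^*(x_1,\ldots,x_k)$ and $W^*(x_{k+1},\ldots,x_n)$ supplies $\tau(p_k q_k) = \tau(p_k)\tau(q_k)$ since $p_k \in W^*(x_1,\ldots,x_k)$ and $q_k \in W^*(s_n-s_k)\subseteq W^*(x_{k+1},\ldots,x_n)$.

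The main obstacle is the trace inequality $\tau(p_k r_n^\perp) \leq \tau(p_k)\tau(q_k)$ itself: the Cuculescu projection $p_k$ need not be dominated by $\tilde r_k$, so one cannot simply multiply $\tilde r_k r_n^\perp \leq q_k$ on the left by $p_k$. I would overcome this by passing to the conditional expectation $\mathcal{E}_k$ onto $W^*(x_1,\ldots,x_k)$: by independence and the commutativity of $s_k$ with $s_n-s_k$, $\mathcal{E}_k(r_n^\perp) = G(s_k)$ with $G(x) = \mu_\beta\bigl([-\lambda-x,\lambda-x]\bigr)$ (where $\mu_\beta$ is the distribution of $s_n-s_k$), and elementary checking shows $G(x)\leq\tau(q_k)$ whenever $|x|\geq 3\lambda$. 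Combining this with the Cuculescu lower bound $p_k|s_k|p_k\geq 3\lambda\,p_k$ (so that $p_k$ is effectively supported where $|s_k|\geq 3\lambda$) delivers $\tau(p_k r_n^\perp) = \tau(p_k G(s_k)) \leq \tau(p_k)\tau(q_k)$. Summing and using $\sum_k \tau(p_k) \leq 1$ with $\tau(q_k)\leq\tau(r_n)+\tau(r_k)$ produces
\[
\sum_{k=1}^n \tau(p_k r_n^\perp) \;\leq\; \bigl(\tau(r_n)+\max_{1\leq k\leq n}\tau(r_k)\bigr)\sum_{k=1}^n \tau(p_k) \;\leq\; \tau(r_n)+\max_{1\leq k\leq n}\tau(r_k),
\]
whence $\tau(p) \leq 2\tau(r_n) + \max_k\tau(r_k) \leq 3\max_k\tau(r_k)$, giving the Etemadi bound.
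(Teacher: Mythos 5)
Your construction and most of the reduction coincide with the paper's proof: the Cuculescu projections with threshold $3\lambda$, the mutual orthogonality of the $p_k$, the fact that $s_n$ (hence $r_n$) commutes with every $p_k$, the split $\tau(p)\le\tau(r_n)+\sum_k\tau(p_kr_n^\perp)$, the bound $\tau(q_k)\le\tau(r_n)+\tau(r_k)$ from the commutative algebra generated by $s_k,s_n$, the factorization $\tau(p_kq_k)=\tau(p_k)\tau(q_k)$ from weak full independence, and the second assertion via $1-p=e_n\le e_1=\mathbf{1}_{[0,3\lambda)}(|x_1|)$ are all fine and are essentially what the paper does. The difference, and the problem, is the step you yourself flag as the main obstacle. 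Your proposed resolution does not close it: from $p_k|s_k|p_k\ge 3\lambda p_k$ together with $G(x)\le\tau(q_k)$ for $|x|\ge 3\lambda$ you cannot conclude $\tau(p_kG(s_k))\le\tau(p_k)\tau(q_k)$. The compression bound says only that the \emph{average} of $|s_k|$ on vectors in $p_k(\mathcal{H})$ is at least $3\lambda$; it does not place $p_k$ under the spectral projection $\mathbf{1}_{[3\lambda,\infty)}(|s_k|)$ (this is exactly the non-domination issue you identified), so a unit vector in $p_k(\mathcal{H})$ may carry substantial weight in the spectral region $\{|s_k|<3\lambda\}$, where $G$ can be as large as $\mu_\beta([-\lambda,\lambda])$, possibly $1$, while $\tau(q_k)$ is small. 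Concretely, with $s_k=\mathrm{diag}(0,7\lambda)$, $p_k$ the rank-one projection onto $\xi=(\sqrt{4/7},\sqrt{3/7})$ and any $G$ supported near $0$ with $\sup_{|x|\ge3\lambda}G(x)=0$, one has $p_k|s_k|p_k=3\lambda p_k$ but $\tau(p_kG(s_k))=\tfrac{4}{7}\tau(p_k)>0$; so the implication as you state it is false, and the conditional-expectation device only transfers the difficulty from $r_n^\perp$ to $G(s_k)$ without resolving it. ``Effectively supported'' is not a usable hypothesis here; you would need genuinely more structure of $p_k$ than the compression inequality.

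For comparison, the paper handles this point at the level of the projection lattice rather than through $\mathcal{E}_k$: it works with the intersection projection $p_k\wedge\mathbf{1}_{[0,\lambda]}(|s_n|)$ (which equals your $p_kr_n^\perp$, since they commute), argues on unit vectors $\xi$ in its range that $\langle|s_k|\xi,\xi\rangle\ge3\lambda$ and $\langle|s_n|\xi,\xi\rangle\le\lambda$, hence $\langle|s_n-s_k|\xi,\xi\rangle\ge2\lambda$ using $|s_n-s_k|\ge|s_k|-|s_n|$ in the commutative algebra, concludes $p_k\wedge\mathbf{1}_{[0,\lambda]}(|s_n|)\le p_k\wedge\mathbf{1}_{[2\lambda,\infty)}(|s_n-s_k|)$, and then applies its Lemma \ref{lem1}, $\tau(p\wedge q)\le\tau(pq)$, before invoking independence. (Note that even this passage from a quadratic-form lower bound on the range of a projection to domination by a spectral projection requires care, so simply quoting the paper's step would not repair your argument either; the honest fix is to prove a statement of the form $f\le q_k$ or $\tau(f)\le\tau(p_k)\tau(q_k)$ for $f=p_kr_n^\perp$ by an argument that uses the simultaneous membership of vectors in both ranges, not the compression bound on $p_k$ alone.) As it stands, your proof has a genuine gap at the inequality $\tau(p_kG(s_k))\le\tau(p_k)\tau(q_k)$.
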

\begin{proof}
As in the proof of Noncommutative Haj\'ek--Renyi inequality, we inductively define Cuculescu's projections
\begin{align*}
e_k = \textbf{1}_{[0, 3 \lambda)}\left( e_{k-1} |s_k| e_{k-1} \right)e_{k-1} \quad (1\leq k\leq n)
\end{align*}
with $e_0 = 1$.
Note that $e_{k-1}$ commutes with $e_{k-1} |s_k| e_{k-1}$ and hence with $\textbf{1}_{[0, 3 \lambda)}\left( e_{k-1} |s_k| e_{k-1} \right)$. Thus $e_k \in W^*\left(x_1, x_2, \ldots, x_k\right)$ $(1\leq k\leq n)$ is also a projection and as before the sequence $\{e_k\}$ is decreasing.
Now define the projections $p_k$ in $W^*\left(x_1, x_2, \ldots, x_k\right)$ by
\begin{align*}
p_k = e_{k-1} \textbf{1}_{[3\lambda, \infty)}\left(e_{k-1}\,|s_k|\,e_{k-1}\right), ~ k=1, 2, \ldots, n.
\end{align*}
Then
\begin{align}\label{eq3}
p_k p_j =0 ~ (k \neq j).
\end{align}
To see \eqref{eq3}, without loss of generality, assume that $k < j$. We have
\begin{eqnarray*}
p_k p_j =& e_{k-1} \textbf{1}_{[3\lambda, \infty)}\left(e_{k-1}\,|s_k|\,e_{k-1}\right)e_{j-1} \textbf{1}_{[3\lambda, \infty)}\left(e_{j-1}\,|s_j|\,e_{j-1}\right)\\
= & \textbf{1}_{[3\lambda, \infty)}\left(e_{k-1}\,|s_k|\,e_{k-1}\right)e_{j-1} \textbf{1}_{[3\lambda, \infty)}\left(e_{j-1}\,|s_j|\,e_{j-1}\right) \quad (\text{as $e_{j-1} \leq e_{k-1}$}).
\end{eqnarray*}
Now we show that the last term is zero. For simplicity set $q_k = \textbf{1}_{[3\lambda, \infty)}\left(e_{k-1}\,|s_k|\,e_{k-1}\right)$ and $q_j =\textbf{1}_{[3\lambda, \infty)}\left(e_{j-1}\,|s_j|\,e_{j-1}\right)$. It follows from $e_{j-1} \leq e_k$ and definition of $e_k$ that
\begin{eqnarray*}
\| q_ke_{j-1}q_j\|^2 = \| q_je_{j-1}q_k\|^2 = \| q_ke_{j-1}q_je_{j-1}q_k\| \leq \| q_ke_{j-1}q_k \| \leq \| q_ke_kq_k \| = \| 0 q_k \| = 0,
\end{eqnarray*}
where the first inequality is due to $q_j \leq 1$. Set $p = \vee_{k=1}^n p_k = \sum_{k=1}^n p_k$.
By an inductive argument, $s_n$ commutes with all $e_k$ and so with all $p_k$, or equivalently, $s_n\left(p_k(\mathcal{H})\right) \subseteq p_k(\mathcal{H})$. Hence $s_n\left(p(\mathcal{H})\right) \subseteq p(\mathcal{H})$. Therefore $ps_np=s_np$, whence $ps_n=ps_np=s_np$.\\
We have
\begin{align*}
\tau (p) &= \tau\left(p \wedge \textbf{1}_{(\lambda, \infty)}\left(|s_n|\right) + p \wedge \textbf{1}_{[0, \lambda]}\left(|s_n|\right)\right)\\
& \qquad\qquad\qquad (\text{by} ~ p = p\wedge q + p \wedge q^\perp ~ \text{whenever}~ pq = qp)\\
&\leq \tau\left(\textbf{1}_{(\lambda, \infty)}\left(|s_n|\right)\right) + \tau\left(\left(\vee_{k=1}^n p_k\right) \wedge \textbf{1}_{[0, \lambda]}\left(|s_n|\right)\right) \\
&\leq \tau\left(\textbf{1}_{(\lambda, \infty)}\left(|s_n|\right)\right) + \tau\left(\vee_{k=1}^n \left(p_k \wedge \textbf{1}_{[0, \lambda]}\left(|s_n|\right)\right)\right)\\
&\qquad\qquad\qquad\qquad\qquad\qquad\qquad\qquad (\text{since~} s_np_k = p_ks_n)\\
&\leq \tau\left(\textbf{1}_{(\lambda, \infty)}\left(|s_n|\right)\right) + \sum_{k=1}^n \tau\left( p_k \wedge \textbf{1}_{[0, \lambda]}\left(|s_n|\right)\right)\quad (\text{by \eqref{NE}})\\
&\leq \tau\left(\textbf{1}_{(\lambda, \infty)}\left(|s_n|\right)\right) + \sum_{k=1}^n \tau\left( p_k \wedge \textbf{1}_{[2\lambda, \infty)}\left(|s_n - s_k|\right)\right).
\end{align*}
To reach the last inequality, consider the commutative algebra $\mathcal{N}$ generated by $s_k, s_n$. If $\xi \in p_k(\mathcal{H}) \cap \textbf{1}_{[0, \lambda]}\left(|s_n|\right)(\mathcal{H})$ is a unit vector, then
\begin{eqnarray*}
\langle |s_k|\xi, \xi \rangle \geq 3\lambda \quad \text{and} \quad \langle |s_n|\xi, \xi \rangle \leq \lambda.
\end{eqnarray*}
Therefore,
\begin{eqnarray*}
\langle |s_n - s_k|\xi, \xi \rangle \geq \langle |s_k|\xi, \xi \rangle - \langle |s_n|\xi, \xi \rangle \geq 2\lambda.
\end{eqnarray*}
Hence $p_k \wedge \textbf{1}_{[0, \lambda]}\left(|s_n|\right) \leq p_k \wedge \textbf{1}_{[2\lambda, \infty)}\left(|s_n - s_k|\right)$. Therefore,
\begin{align*}
\tau (p) &\leq \tau\left(\textbf{1}_{(\lambda, \infty)}\left(|s_n|\right)\right) + \sum_{k=1}^n \tau\left( p_k \textbf{1}_{[2\lambda, \infty)}\left(|s_n - s_k|\right)\right) \quad \text{( Lemma \ref{lem1})}\\
&= \tau\left(\textbf{1}_{(\lambda, \infty)}\left(|s_n|\right)\right) + \sum_{k=1}^n \tau\left( p_k\right) \tau\left(\textbf{1}_{[2\lambda, \infty)}\left(|s_n - s_k|\right)\right)
\end{align*}
by the weakly full independence property of $W^*(x_1, \ldots, x_k)$ and $W^*(x_{k+1}, \ldots, x_n)$.\\
Therefore,
\begin{align}\label{rema1}
\tau (p)&\leq \tau\left(\textbf{1}_{(\lambda, \infty)}\left(|s_n|\right)\right) + \max_{1 \leq k \leq n} \tau\left(\textbf{1}_{[2\lambda, \infty)}\left(|s_n - s_k|\right)\right) \tau\left(\vee_{k=1}^n p_k\right) \quad (\text{by \eqref{eq3}})\\
&\leq \tau\left(\textbf{1}_{(\lambda, \infty)}\left(|s_n|\right)\right) + \max_{1 \leq k \leq n} \tau\left(\textbf{1}_{[2\lambda, \infty)}\left(|s_n - s_k|\right)\right)\nonumber\\
&\leq \tau\left(\textbf{1}_{(\lambda, \infty)}\left(|s_n|\right)\right) + \max_{1 \leq k \leq n} \left\{\tau\left(\textbf{1}_{[\lambda, \infty)}\left(|s_n|\right)\right) + \tau\left(\textbf{1}_{[\lambda, \infty)}\left(|s_k|\right)\right)\right\}\nonumber\\
& ~\qquad \qquad (\text{by considering the commutative algebra generated by $s_k, s_n$})\nonumber\\
&=2\tau\left(\textbf{1}_{(\lambda, \infty)}\left(|s_n|\right)\right) + \max_{1 \leq k \leq n} \tau\left(\textbf{1}_{[\lambda, \infty)}\left(|s_k|\right)\right)\nonumber\\
&\leq 3\max_{1 \leq k \leq n} \tau\left(\textbf{1}_{[\lambda, \infty)}\left(|s_k|\right)\right).\nonumber
\end{align}
Moreover,
\begin{eqnarray*}
\tau(1-p) = \tau\left(\wedge_k p_k^\perp \right) \leq \tau(p_1^\perp) = \tau\left(1_{[0, 3\lambda)}(|x_1|) \right).
\end{eqnarray*}
\end{proof}
\begin{remark}
Notice that the projection $p$ in the previous theorem is nonzero provided that $\tau\left(\textbf{1}_{[3\lambda, \infty)}\left(|s_k| \right)\right) \neq 0$ for some $1 \leq k \leq n$. In fact, if this condition occurs and $p=0$, then $p_k=0$ for all $k$. In particular, $p_1 = \textbf{1}_{[3\lambda, \infty)}\left(|s_1|\right) =0$, and hence $e_1=\textbf{1}_{[0, 3\lambda)}\left(|s_1|\right) = 1$. Hence $e_2 = \textbf{1}_{[0, 3\lambda)}\left(|s_2|\right)$. Inductively, we can deduce that $e_k=1$, and so $\textbf{1}_{[3\lambda, \infty)}\left(|s_k| \right) =0$, which gives rise contradiction.
\end{remark}
\begin{remark}
We should remark that $n$ is fixed and we only require that $s_k$ commutes with $s_n$ for all $k$. Thus our condition is weaker than the commutativity condition $x_ix_j = x_jx_i$ for all $i, j$. For example, for $n=4$, set
$$x_1 = \begin{pmatrix}
2 & i\\
-i & 2
\end{pmatrix},
x_2 = \begin{pmatrix}
1 & -i\\
i & 2
\end{pmatrix},
x_3 = \begin{pmatrix}
2 & 1+i\\
1-i & 3
\end{pmatrix} {\rm ~and~}
x_4 = \begin{pmatrix}
3 & -1-i\\
-1+i & 1
\end{pmatrix}.$$ Then $x_1$, $x_1+x_2$ and $x_1+x_2+x_3$ commute with $x_1+x_2+x_3+x_4 = \begin{pmatrix}
8 & 0\\
0 & 8
\end{pmatrix}$, but not all $x_1, x_2, x_3, x_4$ commute with each other.
\end{remark}

\begin{remark}
From inequality \eqref{rema1}, we conclude that
\begin{align*}
\left(1 - \max_{1 \leq k \leq n} \tau\left(\textbf{1}_{(\frac{\lambda}{2}, \infty)}\left(|s_n - s_k|\right)\right) \right)\tau (p) \leq \tau\left(\textbf{1}_{(\lambda, \infty)}\left(|s_n|\right)\right)
\end{align*}
and hence
\begin{align*}
\tau (p) \leq \frac{\tau\left(\textbf{1}_{(\lambda, \infty)}\left(|s_n|\right)\right)}{\left(1 - \max_{1 \leq k \leq n} \tau\left(\textbf{1}_{(\frac{\lambda}{2}, \infty)}\left(|s_n - s_k|\right)\right) \right)}
\end{align*}
provided that $\max_{1 \leq k \leq n} \tau\left(\textbf{1}_{(\frac{\lambda}{2}, \infty)}\left(|s_n - s_k|\right)\right) \neq 1$.
\end{remark}

\begin{corollary}
Let $X_1, X_2, \ldots, X_n$ be independent random variables. If $t>0$ is an arbitrary positive real number, then
\begin{align*}
\mathbb{P}\left(\max_{1 \leq k \leq n} \left|\sum_{i=1}^k X_i\right| \geq 3t \right) \leq 3 \max_{1 \leq k \leq n} \mathbb{P}\left(\left|\sum_{i=1}^k X_i\right| \geq t \right).
\end{align*}
\end{corollary}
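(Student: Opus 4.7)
The plan is to obtain the classical Etemadi inequality as a direct specialization of Theorem \ref{Etem} to the commutative setting. Take $\mathfrak{M} = L^\infty(\Omega, \mathcal{F}, \mathbb{P})$ equipped with the trace $\tau(Y) = \mathbb{E}(Y)$; this is a commutative von Neumann algebra with a normal faithful tracial state. In this setting all elements commute, so the hypothesis $s_k s_n = s_n s_k$ is automatic, and classical independence of the $X_k$'s is equivalent to the weakly fully independent condition required by the theorem, since for each $j$ the $W^*$-algebras $W^*(X_1, \ldots, X_{j-1})$ and $W^*(X_j, \ldots, X_n)$ are generated (modulo null sets) by the $\sigma$-algebras $\sigma(X_1, \ldots, X_{j-1})$ and $\sigma(X_j, \ldots, X_n)$, and the factorization $\tau(ab) = \tau(a)\tau(b)$ on these algebras is precisely the classical independence of the underlying $\sigma$-algebras.

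First I would reduce to the bounded case by a standard truncation $X_k \mapsto X_k \mathbf{1}_{\{|X_k| \leq N\}}$ and pass to the limit $N \to \infty$ via dominated/monotone convergence on both sides; this is a routine technicality. Then I would compute the Cuculescu projections of Theorem \ref{Etem} explicitly in the commutative setting: starting from $e_0 = \mathbf{1}$, an easy induction gives
\[
e_k = \mathbf{1}_{\{\max_{1 \leq j \leq k}|S_j| < 3t\}}, \qquad p_k = e_{k-1} - e_k = \mathbf{1}_{\{\max_{1 \leq j < k}|S_j| < 3t,\, |S_k| \geq 3t\}},
\]
so that the projection $p = \bigvee_{k=1}^n p_k = \sum_{k=1}^n p_k$ furnished by Theorem \ref{Etem} coincides with the indicator of the event $\{\max_{1 \leq k \leq n} |S_k| \geq 3t\}$ (that is, the first time the partial sum exceeds $3t$).

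Applying Theorem \ref{Etem} with $\lambda = t$ then yields
\[
\mathbb{P}\!\left(\max_{1 \leq k \leq n} |S_k| \geq 3t\right) = \tau(p) \leq 3 \max_{1 \leq k \leq n} \tau\!\left(\mathbf{1}_{[t, \infty)}(|s_k|)\right) = 3 \max_{1 \leq k \leq n} \mathbb{P}(|S_k| \geq t),
\]
which is exactly the desired inequality. I do not anticipate any genuine obstacle; the argument is essentially a dictionary translation from the operator-algebraic statement to the classical probabilistic one. The only mild friction points are the identification of abstract weak full independence with classical independence and the extension from bounded to possibly unbounded random variables, both of which are standard.
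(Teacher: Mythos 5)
Your argument is essentially the paper's own proof: specialize Theorem \ref{Etem} to the commutative algebra $L^\infty(\Omega,\mathcal{F},\mathbb{P})$ with $\tau=\mathbb{E}$, identify the Cuculescu projections $e_k$, $p_k$ with the indicators of the first-passage events $E_k$ and $E_{k-1}\cap\{|S_k|\geq 3t\}$, and read off $\tau(p)=\mathbb{P}(\max_{1\leq k\leq n}|S_k|\geq 3t)$. The only difference is that you explicitly add a truncation step to cover unbounded $X_k$ (a point the paper passes over silently), which is a reasonable refinement rather than a change of method.
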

\begin{proof}
The projection $e_j$ in the proof of noncommutative Etemadi inequality above corresponds to the characteristic function of the subset $E_j = \{\chi_{E_{j-1}}|S_j|<3\lambda\} \cap E_{j-1}$ with $E_0=\Omega$.\\
It is easy to check that
\begin{align*}
\bigcup_{k=1}^nP_k=\left\{\max|S_k|\geq 3\lambda \right\},
\end{align*}
whenever
\begin{align*}
P_k=E_{k-1} \cap\left\{|S_k|\geq 3\lambda \right\}.
\end{align*}
\end{proof}

\end{document}